\newtheorem{notation}[theorem]{Notation}
\newcommand{\id}{\operatorname{id}}
\newcommand{\ev}{\mathop{\mathrm{ev}}\nolimits}
\newcommand{\R}{\mathbb{R}}
\newcommand{\C}{\mathbb{C}}
\newcommand{\pairing}[2]{\langle #1, #2 \rangle}
\newcommand{\Sym}{\operatorname{Sym}}
\newcommand{\trans}{{}^t\!}
\newcommand{\spanvec}{\operatorname{span}}
\newcounter{mycounter}
\begin{document}
\title{On a method to construct exponential families by representation theory}
%
\author{Koichi Tojo\inst{1} \and
Taro Yoshino\inst{2}}
\authorrunning{K.~Tojo \and T.~Yoshino}
%
\institute{RIKEN Center for Advanced Intelligence Project, Tokyo, Japan/ \\ Department of Mathematics, Faculty of Science and Technology, Keio University, 3-14-1 Hiyoshi, Kohoku-ku, Yokohama, 223-8522, Japan\\
\email{koichi.tojo@riken.jp}\\
 \and
Graduate School of Mathematical Science, The University of Tokyo,\\ 3-8-1 Komaba, Meguro-ku, Tokyo 153-8914, Japan\\
\email{yoshino@ms.u-tokyo.ac.jp}}
\maketitle              
\begin{abstract}
Exponential family plays an important role in information geometry. 
In \cite{TY18}, we introduced a method to construct an exponential family $\mathcal{P}=\{p_\theta\}_{\theta\in\Theta}$ on a homogeneous space $G/H$ from a pair $(V,v_0)$. 
Here $V$ is a representation of $G$ and $v_0$ is an $H$-fixed vector in $V$. 
Then the following questions naturally arise: 
(Q1) when is the correspondence $\theta\mapsto p_\theta$ injective?
(Q2) when do distinct pairs $(V,v_0)$ and $(V',v_0')$ generate the same family?
In this paper, we answer these two questions (Theorems~\ref{theorem:main} and \ref{theorem:equiv}). 
Moreover, in Section~\ref{sec:example}, we consider the case $(G,H)=(\R_{>0}, \{1\})$ with a certain representation on $\R^2$. 
Then we see the family obtained by our method is essentially generalized inverse Gaussian distribution (GIG). 

\keywords{
exponential family \and representation theory \and homogeneous space \and generalized inverse Gaussian distribution}

\end{abstract}
\section{Introduction}
Let $G$ be a Lie group and $H$ its closed subgroup. 
In \cite{TY18}, we introduced a method to construct an exponential family 
$\mathcal{P}=\{p_\theta\}_{\theta\in \Theta}$ on the homogeneous space $X:=G/H$ from $(V,v_0)$. 
In this paper, we answer two natural questions on our method. 

\subsection{Correspondence parameters and probability measures}
In the theory of exponential family, 
``minimal representation'' is important (\cite{Barndorff-Nielsen70}). 
If an exponential family is realized by ``minimal representation'', 
then we obtain one-to-one correspondence between the parameter space and the family of probability measures, which enable us to make use of the family. 
Moreover, from the perspective of information geometry, the correspondence is used as a coordinate. 
Then we would like to consider the following:
\begin{question}\label{question:injective}
When is the following correspondence injective?
\begin{align} 
\Theta \ni \theta \mapsto p_\theta\in \mathcal{P}. \label{eq:correspondence}
\end{align}
\end{question}
We want to answer this question for families obtained by our method. 
We give a necessary and sufficient condition for the injectivity of (\ref{eq:correspondence}) in Theorem~\ref{theorem:main}. 
It is, however, a little bit difficult to check. 
So, we will see the following easier equivalent conditions (A) and (B) are necessary. 
\begin{enumerate}
\item[(A)] The orbit $Gv_0$ is not contained in any proper affine subspace of $V$. 
\item[(B)]
\begin{enumerate}
\item[(1)] $v_0$ is cyclic, 
\item[(2)] $V^\vee$ has no nonzero $G$-fixed vector. 
\end{enumerate}
\end{enumerate}
In the case where $G$ is compact or connected semisimple, 
they are also sufficient (see Remark~\ref{rem:omega_trivial}). 
\subsection{Equivalence relation}
Our method in \cite{TY18} constructs an exponential family from a pair $(V,v_0)$. 
In some cases, the same exponential family comes from distinct pairs $(V,v_0)$ and $(V',v_0')$. 
To reduce the choice of $(V,v_0)$, it is useful to give an answer to the following question. 
\begin{question}\label{question:equiv_rel}
When do distinct pairs $(V,v_0)$ and $(V',v_0')$ generate the same family?
\end{question}
We give an answer to this question in Theorem~\ref{theorem:equiv}. 
More precisely, we introduce an equivalence relation on the set of pairs $\{(V,v_0)\}$ and 
show that two families obtained by $(V,v_0)$, $(V',v_0')$ coincide if $(V,v_0)\sim (V',v_0')$. 

\section{Main theorems}
\subsection{Method introduced in \cite{TY18}}\label{sec:setting}
Before stating our main results, 
we recall the method introduced in \cite{TY18}. 
Let $G$ be a Lie group and $H$ its closed subgroup. 
Then the quotient space $X:=G/H$ naturally equips manifold structure, 
which is called the {\it homogeneous space} of $G$. 

Let $V$ be a finite dimensional real vector space, 
and $\rho\colon G\to GL(V)$ a Lie group homomorphism. 
Then the pair $V:=(\rho,V)$ is called a {\it representation} of $G$. 
We often use simpler notation $gv:=\rho(g)v$ for $g\in G$ and $v\in V$. 

A vector $v_0\in V$ is said to be $H$-{\it fixed} if $hv_0=v_0$ for any $h\in H$. 
We denote by $V^H$ the linear subspace consisting of all $H$-fixed vectors. 
Let $(V, v_0)$ be a pair of representation of $G$ and an $H$-fixed vector. 

We put 
\begin{align}
\Omega_0(G,H)&:=\{ \chi:G\to \R_{>0}\ |\ \chi \text{ is a continuous group homomorphism}, \chi|_H=1\},  \label{eq:omega_GH}\\
\log\Omega_0(G,H)&:=\{\log \chi:G\to \R\ |\ \chi\in \Omega_0(G,H)\}. \label{eq:log_omega}
\end{align}
Take a relatively $G$-invariant measure $\mu$ on $X$. 
Then we define a measure $\tilde{p}_\theta$ on $X$ parameterized by $V^\vee \times \Omega_0(G,H)$ as follows:
\begin{align}
d\tilde{p}_\theta(x)=d\tilde{p}_{\xi,\chi}(x)
:=\exp(-\pairing{\xi}{xv_0})\chi(x)d\mu(x)\quad (x\in X)\label{eq:tilde_p}, 
\end{align}
where $\theta=(\xi,\chi) \in V^\vee\times\Omega_0(G,H)$. 
\begin{remark}\label{remark:well_def_notion}
Since $v_0$ is $H$-fixed, 
the notion $xv_0$ in (\ref{eq:tilde_p}) is well-defined. 
Owing to $\chi|_H=1$, the notion $\chi(x)$ is also well-defined for $\chi\in \Omega_0(G,H)$. 
\end{remark}
Then we consider the normalization of the measures above. 
Put 
\begin{align}
\Theta&:=\{\theta= (\xi,\chi)\in V^\vee\times \Omega_0(G,H)\ |\ \int_X d\tilde{p}_\theta<\infty\},\label{eq:parameter_space}\\
\varphi(\theta)&:=\log \int_X d\tilde{p}_{\theta}\quad (\theta\in \Theta),\label{eq:normalizing_constant}\\
dp_\theta&:=e^{-\varphi(\theta)}d\tilde{p}_\theta. \label{eq:normalized_measure}
\end{align}
Then we obtain a family of distributions on $X$ as follows: 
\begin{align}
\mathcal{P}:=\{p_\theta \}_{\theta\in \Theta}. 
\end{align}
This is an exponential family if $\Theta\neq \emptyset$ (\cite{TY18}). 

\subsection{Correspondence}
In this section, we give an answer to Question~\ref{question:injective}. 
Namely, we state a criterion of the injectivity of the correspondence (\ref{eq:correspondence}). 
Moreover, we also give necessary conditions, which one can easily check (Proposition~\ref{prop:proper_affine_subspace}) 

\begin{theorem}\label{theorem:main}
In the setting as in Section~\ref{sec:setting}, 
the following three conditions are equivalent:
\begin{enumerate}
\item[$(i)$] The correspondence $\Theta\ni \theta \mapsto p_\theta \in \mathcal{P}$ is injective. 
\item[$(ii)$] There does not exist $\xi\in V^\vee\setminus\{0\}$ such that $f_\xi\in \log\Omega_0(G,H)$. 
\item[$(iii)$] There does not exist a triple $(\xi, \chi, c)\in (V^\vee \setminus \{0\})\times \Omega_0(G,H)\times \R$ satisfying $\pairing{\xi}{gv_0}=\log \chi(g)+c$ for any $g\in G$. 
\end{enumerate}
Here, $f_\xi(g):=\pairing{\xi}{gv_0-v_0}$ for $g\in G$. 
\end{theorem}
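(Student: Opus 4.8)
The plan is to settle the routine equivalence $(ii)\Leftrightarrow(iii)$ first and then to prove $(i)\Leftrightarrow(iii)$, which carries the real content; together these give all three equivalences.

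For $(ii)\Leftrightarrow(iii)$ I would argue by contraposition on both sides. If $f_\xi\in\log\Omega_0(G,H)$ for some $\xi\neq 0$, write $f_\xi=\log\chi$ with $\chi\in\Omega_0(G,H)$; then $\pairing{\xi}{gv_0-v_0}=\log\chi(g)$, so the triple $(\xi,\chi,c)$ with $c:=\pairing{\xi}{v_0}$ violates $(iii)$. Conversely, given such a triple, evaluating $\pairing{\xi}{gv_0}=\log\chi(g)+c$ at $g=e$ and using $\chi(e)=1$ forces $c=\pairing{\xi}{v_0}$, whence $f_\xi(g)=\pairing{\xi}{gv_0-v_0}=\log\chi(g)$, i.e.\ $f_\xi=\log\chi\in\log\Omega_0(G,H)$. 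Here I would stress that $f_\xi|_H=0$ holds automatically because $v_0$ is $H$-fixed, so membership in $\log\Omega_0(G,H)$ only requires $f_\xi$ to be a homomorphism and no separate condition on $H$ intervenes.

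For $(i)\Rightarrow(iii)$ (again by contraposition), suppose the correspondence is not injective, so $p_{\theta_1}=p_{\theta_2}$ for some $\theta_1=(\xi_1,\chi_1)\neq\theta_2=(\xi_2,\chi_2)$. Each $p_{\theta_i}$ has the continuous positive density $e^{-\varphi(\theta_i)}\exp(-\pairing{\xi_i}{xv_0})\chi_i(x)$ with respect to $\mu$, so equality of the two measures forces equality of these densities $\mu$-a.e. The key structural input is that a nonzero relatively $G$-invariant measure on $X=G/H$ has $G$-invariant support, which by transitivity of the $G$-action must be all of $X$; hence two continuous functions agreeing $\mu$-a.e.\ agree everywhere. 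Taking logarithms and rearranging then gives $\pairing{\xi}{gv_0}=\log\chi(g)+c$ for all $g\in G$, where $\xi:=\xi_2-\xi_1$, $\chi:=\chi_2/\chi_1\in\Omega_0(G,H)$ and $c:=\varphi(\theta_1)-\varphi(\theta_2)$. Finally I would rule out $\xi=0$: if $\xi=0$, evaluation at $g=e$ gives $c=0$ and then $\chi\equiv 1$, i.e.\ $\chi_1=\chi_2$ and $\xi_1=\xi_2$, contradicting $\theta_1\neq\theta_2$. Thus $\xi\neq0$ and $(iii)$ fails.

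For $(iii)\Rightarrow(i)$, assume a triple $(\xi,\chi,c)$ with $\xi\neq0$ and $\pairing{\xi}{gv_0}=\log\chi(g)+c$; the identity gives $\exp(-\pairing{\xi}{xv_0})\chi(x)=e^{-c}$ for all $x$. Fixing any $\theta_0=(\xi_0,\chi_0)\in\Theta$ and setting $\theta_1:=(\xi_0+\xi,\chi_0\chi)$, one computes $d\tilde{p}_{\theta_1}=e^{-c}\,d\tilde{p}_{\theta_0}$; hence $\theta_1\in\Theta$ and, after normalization, $p_{\theta_1}=p_{\theta_0}$, while $\theta_1\neq\theta_0$ because $\xi\neq0$, so injectivity fails. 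I expect the main obstacle to be the a.e.-to-everywhere upgrade in $(i)\Rightarrow(iii)$, namely justifying the full support of the relatively invariant $\mu$; the degenerate case $\Theta=\emptyset$ (where $(i)$ is vacuous) should also be addressed separately, since the construction in $(iii)\Rightarrow(i)$ needs a point of $\Theta$ to start from.
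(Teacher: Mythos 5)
Your proposal is correct and follows essentially the same route as the paper: the same construction $\theta_1=(\xi_0+\xi,\chi_0\chi)$ with $d\tilde{p}_{\theta_1}=e^{-c}d\tilde{p}_{\theta_0}$ for the failure of injectivity, and the same extraction of $\xi=\xi_2-\xi_1$ from equality of normalized densities, merely organized as $(ii)\Leftrightarrow(iii)$ plus $(i)\Leftrightarrow(iii)$ instead of the paper's cycle $\lnot(ii)\Rightarrow\lnot(iii)\Rightarrow\lnot(i)\Rightarrow\lnot(ii)$. You are in fact slightly more careful than the paper on two points it passes over silently: the upgrade from $\mu$-a.e.\ equality of the continuous densities to equality everywhere (via full support of a nonzero relatively invariant measure on a homogeneous space) and the degenerate case $\Theta=\emptyset$.
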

We prove this theorem in Section~\ref{sec:theorem_main}.

Moreover, we also give necessary conditions for the injectivity of (\ref{eq:correspondence}). 
To state them, we prepare the notion of cyclic. 
\begin{definition}[cyclic]
We say a vector $v\in V$ is cyclic if 
$\spanvec \{ gv\, |\, g\in G\}=V$.  
\end{definition}

\begin{proposition}\label{prop:proper_affine_subspace}
If the correspondence (\ref{eq:correspondence}) is injective, 
then the following equivalent conditions (A) and (B) are satisfied. 
Namely, ((\ref{eq:correspondence}) is injective)$\Rightarrow$ (A) $\Leftrightarrow$ (B). 
\begin{enumerate}
\item[(A)] The orbit $G v_0$ is not contained in any proper affine subspace of $V$. 
\item[(B)] 
\begin{enumerate}
\item[(1)] $v_0\in V$ is cyclic, 
\item[(2)] $\rho^\vee:G\to GL(V^\vee)$ has no nonzero $G$-fixed vector.  
\end{enumerate}
\end{enumerate}
Here $\rho^\vee$ is the contragredient representation of $G$. 
Moreover, in the case where $\Omega_0(G,H)=\{1\}$, 
the converse implication also holds. 
\end{proposition}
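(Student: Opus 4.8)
The plan is to split the statement into three tasks: the purely linear-algebraic equivalence (A)$\Leftrightarrow$(B), the implication (injective)$\Rightarrow$(A), and the converse under the hypothesis $\Omega_0(G,H)=\{1\}$. The latter two will follow almost immediately from Theorem~\ref{theorem:main}, so the genuine work lies in the equivalence of (A) and (B).

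First I would introduce the two subspaces
$W:=\spanvec\{gv_0-v_0\mid g\in G\}$ and $U:=\spanvec\{gv_0\mid g\in G\}$,
both of which are easily checked to be $G$-stable, and record three elementary facts: the affine hull of $Gv_0$ equals $v_0+W$, so (A) is equivalent to $W=V$; one has $U=W+\R v_0$, so (B)(1) is equivalent to $U=V$; and a functional $\xi\in V^\vee$ lies in the annihilator $W^\perp$ exactly when $f_\xi\equiv 0$, i.e. when $\pairing{\xi}{gv_0}$ is independent of $g$. Granting these, the direction (A)$\Rightarrow$(B) is immediate: $W=V$ forces $U\supseteq W=V$, giving (B)(1), and any $G$-fixed $\xi\in V^\vee$ annihilates every $gv_0-v_0$, hence all of $W=V$, so $\xi=0$, giving (B)(2).

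For the converse (B)$\Rightarrow$(A) I would argue by contradiction. Assume $U=V$ and $(V^\vee)^G=\{0\}$ but $W\neq V$. Since $V=U=W+\R v_0$, this forces $V=W\oplus\R v_0$ with $W$ a hyperplane, so its annihilator $W^\perp$ is a $G$-stable line, spanned by some $\xi_0$; moreover $\xi_0(v_0)\neq 0$, since otherwise $\xi_0$ would vanish on all of $V=W\oplus\R v_0$. Writing the contragredient action on this line as $g\xi_0=\lambda(g)\xi_0$ and using $\xi_0\in W^\perp$, I compute $(g\xi_0)(v_0)=\xi_0(g^{-1}v_0)=\xi_0(v_0)$, whence $\lambda(g)\xi_0(v_0)=\xi_0(v_0)$ and therefore $\lambda\equiv 1$; thus $\xi_0$ is a nonzero $G$-fixed vector, contradicting (B)(2). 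This reduction to a $G$-stable line together with the character computation on it is the one genuinely nontrivial step, and the main obstacle is ensuring that $W$ is $G$-stable (so that $W^\perp$ carries a $G$-action) and that $\xi_0(v_0)\neq 0$, so that the character argument is not vacuous.

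Finally, for the two implications involving injectivity I would invoke condition $(ii)$ of Theorem~\ref{theorem:main}. Since the constant function $0=\log 1$ lies in $\log\Omega_0(G,H)$, a failure of (A) means $W\neq V$, hence $W^\perp\neq\{0\}$, so any nonzero $\xi\in W^\perp$ satisfies $f_\xi\equiv 0\in\log\Omega_0(G,H)$, violating $(ii)$; thus injectivity implies (A). When $\Omega_0(G,H)=\{1\}$ we have $\log\Omega_0(G,H)=\{0\}$, so condition $(ii)$ says precisely that no nonzero $\xi$ has $f_\xi\equiv 0$, i.e. $W^\perp=\{0\}$, i.e. $W=V$, which is exactly (A). Hence in this case (A) is equivalent to $(ii)$ and therefore to injectivity, which yields the asserted converse.
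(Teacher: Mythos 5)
Your proof is correct and follows essentially the same route as the paper: both reduce (A) to the nonexistence of a nonzero $\xi\in V^\vee$ with $f_\xi\equiv 0$, deduce the two injectivity implications from Theorem~\ref{theorem:main}(ii), and prove (B)$\Rightarrow$(A) by showing that a functional constant on the orbit of a cyclic vector must be $G$-fixed. The paper packages this last step as Lemma~\ref{lem:property_V_xi} and verifies $g^\vee\xi=\xi$ directly by pairing against the whole orbit, whereas you pass through the $G$-stable hyperplane $W$ and a character computation on the line $W^\perp$ --- a cosmetic difference only.
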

We prove this proposition in Section~\ref{subsec:proof_of_prop}

\begin{remark}\label{rem:omega_trivial}
In the case where $G$ is compact or connected semisimple, 
we have $\Omega_0(G,H)=\{1\}$. 
See \cite{TY18} for the details. 
\end{remark}

\subsection{Equivalence}
We use the same notation 
as in Section~\ref{sec:setting}. 
In this subsection, we give an answer to Question~\ref{question:equiv_rel}. 
To state it, we introduce the notations 
$\tilde{\mathcal{V}}(G)$ and $\tilde{\mathcal{V}}(G,H)$. 
\begin{definition}
We put 
\begin{align*}
\tilde{\mathcal{V}}(G)&:=\{ (V,v_0)\ |\ V \text{ is a finite dimensional real representation of }G, v_0\in V \text{ is cyclic}\}, \\
\tilde{\mathcal{V}}(G,H)&:=\{ (V,v_0)\in \tilde{\mathcal{V}}(G)\ |\  v_0\in V^H \}. 
\end{align*}

We say elements $(V,v_0)$ and $(V',v_0')$ in $\tilde{\mathcal{V}}(G)$ are equivalent if there exists a $G$-equivariant linear isomorphism $\psi:V\to V'$ such that $\psi(v_0)=v_0'$ and denote it by $(V,v_0)\sim (V',v_0')$. 
This is an equivalence relation on $\tilde{\mathcal{V}}(G)$. 
By definition, this is also an equivalence relation on $\tilde{\mathcal{V}}(G,H)$. 
\end{definition}

\begin{theorem}\label{theorem:equiv}
Equivalent elements in $\tilde{\mathcal{V}}(G,H)$ generate the same family by our method. 
\end{theorem}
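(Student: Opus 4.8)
The plan is to unwind the definitions and show that a $G$-equivariant linear isomorphism $\psi\colon V\to V'$ with $\psi(v_0)=v_0'$ induces a dictionary between all the data entering the construction of Section~\ref{sec:setting}, so that the two families $\mathcal{P}$ and $\mathcal{P}'$ literally consist of the same measures on $X=G/H$. The key observation is that the only places where the pair $(V,v_0)$ enters the formula are through the term $\pairing{\xi}{xv_0}$ in (\ref{eq:tilde_p}) and through the parameter space $V^\vee$; everything else ($\mu$, $\chi$, $\Omega_0(G,H)$, the normalization) depends only on $(G,H)$ and not on the representation. So the whole proof reduces to matching these representation-dependent pieces via $\psi$.

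First I would introduce the transpose (dual) map $\psi^\vee\colon V'^\vee\to V^\vee$, which is a linear isomorphism, and check it is $G$-equivariant for the contragredient representations: for $\xi'\in V'^\vee$ and $g\in G$ one has $\pairing{\psi^\vee(g\xi')}{v}=\pairing{g\xi'}{\psi(v)}=\pairing{\xi'}{g^{-1}\psi(v)}=\pairing{\xi'}{\psi(g^{-1}v)}=\pairing{\psi^\vee\xi'}{g^{-1}v}=\pairing{g\psi^\vee\xi'}{v}$, using $G$-equivariance of $\psi$. Next I would record the central identity: for every $g\in G$ and every $\xi'\in V'^\vee$,
\begin{align}
\pairing{\xi'}{g v_0'}=\pairing{\xi'}{g\psi(v_0)}=\pairing{\xi'}{\psi(gv_0)}=\pairing{\psi^\vee(\xi')}{gv_0}. \label{eq:equiv_pairing}
\end{align}
Writing $\xi:=\psi^\vee(\xi')$, identity (\ref{eq:equiv_pairing}) says that the exponent appearing in the construction from $(V',v_0')$ at parameter $(\xi',\chi)$ equals the exponent from $(V,v_0)$ at parameter $(\xi,\chi)$ for every point $x=gH$ of $X$; here I must also note $xv_0$ is well-defined by Remark~\ref{remark:well_def_notion} and that the computation descends from $G$ to $X$ because $v_0,v_0'$ are $H$-fixed.

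With (\ref{eq:equiv_pairing}) in hand the rest is bookkeeping. Since the parameter $\chi$ ranges over the same set $\Omega_0(G,H)$ for both pairs and the reference measure $\mu$ is the same, the unnormalized measures satisfy $d\tilde p_{(\xi,\chi)}=d\tilde p'_{(\xi',\chi)}$ pointwise on $X$ whenever $\xi=\psi^\vee(\xi')$. Consequently the finiteness condition defining $\Theta$ in (\ref{eq:parameter_space}) holds for $(\xi,\chi)$ if and only if it holds for $(\xi',\chi)$, so the map $(\xi',\chi)\mapsto(\psi^\vee(\xi'),\chi)$ is a bijection $\Theta'\to\Theta$; the normalizing constants $\varphi$ in (\ref{eq:normalizing_constant}) then agree, and hence the normalized measures in (\ref{eq:normalized_measure}) agree, giving $\mathcal{P}=\mathcal{P}'$ as sets of probability measures on $X$. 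I do not anticipate a genuine obstacle here, since the argument is a direct transport of structure; the only point requiring a little care is the bijectivity of $\psi^\vee$ on parameter spaces (which is immediate because $\psi$, hence $\psi^\vee$, is an isomorphism) and the verification that the identification is compatible with passing from $G$ to $X$, which is exactly what the $H$-fixedness of $v_0$ and $v_0'$ guarantees.
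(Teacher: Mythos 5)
Your proof is correct. The decisive computation is the identity $\pairing{\xi'}{gv_0'}=\pairing{\psi^\vee\xi'}{gv_0}$, which is exactly the content of the paper's Lemma~\ref{lem:property_of_eta}(3) (there phrased as $\eta'=\eta\circ\psi^\vee$), so the heart of your argument and the paper's coincide. The packaging differs, though: the paper reduces the theorem to the equality of the two subspaces $\{g\mapsto\pairing{\xi}{gv_0}\}$ and $\{g\mapsto\pairing{\xi'}{gv_0'}\}$ of $C(G)^H$ and then invokes the full one-to-one correspondence $\mathcal{V}(G)\leftrightarrow\mathcal{W}(G)$ of Lemma~\ref{lemma:one_to_one_corresp} together with Lemma~\ref{lemma:H-correspondence}, machinery that is considerably stronger than what the theorem needs (it also establishes injectivity of the correspondence, which is irrelevant here). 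You instead transport the structure directly at the level of measures: the bijection $(\xi',\chi)\mapsto(\psi^\vee\xi',\chi)$ matches unnormalized densities pointwise, hence matches $\Theta'$ with $\Theta$, the normalizing constants, and finally the normalized families. This is more elementary and, in one respect, more complete than the paper's write-up, since you make explicit the bookkeeping (finiteness of the integral, equality of $\varphi$, independence of everything else from the representation) that the paper leaves implicit in the phrase ``it is enough to show.'' One small remark: the $G$-equivariance of $\psi^\vee$ that you verify is not actually used anywhere in your argument; only the defining adjunction $\pairing{\psi^\vee\xi'}{v}=\pairing{\xi'}{\psi v}$ together with the $G$-equivariance of $\psi$ itself enters the chain of equalities.
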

We prove this theorem in Section~\ref{sec:theorem_equiv}. 

\begin{remark}
From Theorem~\ref{theorem:equiv}, 
in the special case $\dim V^H=1$, 
the choice of $v_0$ is essentially unique. 
In the next section, 
we also see an example in which the choice of $v_0$ is essentially unique even if $\dim V^H>1$. 
\end{remark}

\section{Generalized inverse Gaussian distribution}\label{sec:example}
Throughout this section, we put $G=\R_{>0}$, $H=\{1\}$ and $V=\R^2$, and consider a representation $\rho\colon G\to GL(V)$ given by
$\rho(g)=\begin{pmatrix} g & \\ & g^{-1}\end{pmatrix}$ for $g\in G$.
We answer Questions~\ref{question:injective} and \ref{question:equiv_rel} for this case. 

We consider the following two cases. \\
(Case~1) In the case where $\begin{pmatrix}r\\s \end{pmatrix}\in V^H=V$ with $r=0$ or $s=0$: \\
Vectors $\begin{pmatrix}r\\0\end{pmatrix}$, $\begin{pmatrix}0\\s\end{pmatrix}$ are not cyclic. Therefore the obtained families have ``unessential parameters''. \\
(Case~2) In the case where $\begin{pmatrix}r\\s \end{pmatrix}\in V^H$ with $r\neq0$ and $s\neq 0$: 
\begin{proposition}\label{prop:GIG}
The pairs $(V,\begin{pmatrix}r\\s\end{pmatrix})$ with $r\neq 0$ and $s\neq 0$ are equivalent each other. 
Moreover, we obtain the family $\{dp_{a,b,\lambda}\}_{(a,b,\lambda)\in\Theta}$ of GIG (\ref{def:GIG}) by applying our method to $(V,\begin{pmatrix}r\\s\end{pmatrix})$, 
where $\Theta=\{(a,b,\lambda)\in \R^3\ |\ (a,b,\lambda) \text{ satisfies }(\ref{eq:param_GIG})\}$.
\end{proposition}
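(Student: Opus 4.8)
The plan is to work everything out in closed form, since with $G=\R_{>0}$ and $H=\{1\}$ the homogeneous space is simply $X=G=\R_{>0}$ and $V^H=V=\R^2$, so the whole construction can be written explicitly. First I would parametrize the two components of $\theta=(\xi,\chi)$. Every continuous homomorphism $\chi\colon\R_{>0}\to\R_{>0}$ is of the form $\chi(x)=x^{\lambda}$ for a unique $\lambda\in\R$, so $\Omega_0(G,H)\cong\R$; and writing $\xi=(\xi_1,\xi_2)\in V^\vee$ via the standard pairing, the orbit vector is $xv_0=\rho(x)v_0=\begin{pmatrix}rx\\ s x^{-1}\end{pmatrix}$, hence $\pairing{\xi}{xv_0}=\xi_1 r\,x+\xi_2 s\,x^{-1}$. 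I would take the Haar measure $d\mu(x)=dx/x$ as the relatively $G$-invariant measure on $\R_{>0}$; any other admissible choice differs by a character and is absorbed into $\chi$, so the resulting family is unaffected.

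For the equivalence statement I would invoke Schur's lemma. The representation splits as $V=\R e_1\oplus\R e_2$ into the weight spaces on which $x$ acts by $x$ and by $x^{-1}$ respectively; these are non-isomorphic one-dimensional real representations, so every $G$-equivariant endomorphism of $V$ is diagonal, $\psi=\mathrm{diag}(c_1,c_2)$. Given two admissible vectors $\begin{pmatrix}r\\ s\end{pmatrix}$ and $\begin{pmatrix}r'\\ s'\end{pmatrix}$ with all entries nonzero, the map $\psi=\mathrm{diag}(r'/r,\,s'/s)$ is a $G$-equivariant linear isomorphism carrying one to the other, giving the required equivalence in $\tilde{\mathcal{V}}(G,H)$. (Membership in $\tilde{\mathcal{V}}(G)$ needs cyclicity, which holds because the two orbit points $(r,s)$ and $(2r,s/2)$ are already linearly independent when $rs\neq0$.) By Theorem~\ref{theorem:equiv} all these pairs generate one and the same family, so it suffices to identify that family for a single convenient choice.

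Then I would carry out the GIG identification by substituting into (\ref{eq:tilde_p}):
\begin{align*}
d\tilde{p}_\theta(x)=\exp\!\big(-\xi_1 r\,x-\xi_2 s\,x^{-1}\big)\,x^{\lambda}\,\frac{dx}{x}
=x^{\lambda-1}\exp\!\big(-b\,x-a\,x^{-1}\big)\,dx,
\end{align*}
with $b:=\xi_1 r$ and $a:=\xi_2 s$. This is precisely the unnormalized GIG density (\ref{def:GIG}) in the variables $(a,b,\lambda)$, and normalizing by $e^{-\varphi(\theta)}$ produces $dp_{a,b,\lambda}$. Since $r,s\neq0$, the assignment $(\xi_1,\xi_2,\lambda)\mapsto(a,b,\lambda)=(\xi_1 r,\xi_2 s,\lambda)$ is a linear bijection, which identifies the two parameter sets.

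The main work, and the only genuine obstacle, is matching the parameter \emph{spaces}: I must show that the definition of $\Theta$ in (\ref{eq:parameter_space}) through $\int_X d\tilde{p}_\theta<\infty$ is exactly the GIG admissibility region (\ref{eq:param_GIG}). This reduces to analyzing convergence of $\int_0^\infty x^{\lambda-1}e^{-bx-a/x}\,dx$, separating the behaviour as $x\to\infty$ (governed by $b$) from that as $x\to0^+$ (governed by $a$), and treating the boundary cases $a=0$ and $b=0$, where the sign of $\lambda$ becomes decisive. Verifying that these cases reproduce precisely the conditions (\ref{eq:param_GIG}), and that no $\theta$ outside that region is admissible, is the careful bookkeeping that completes the proof.
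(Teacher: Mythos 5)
Your proposal follows essentially the same route as the paper: exhibit an explicit diagonal $G$-equivariant isomorphism between the admissible vectors, invoke Theorem~\ref{theorem:equiv} to reduce to one representative, identify $\Omega_0(G,H)=\{x\mapsto x^\lambda\}$, take $d\mu=dx/x$, and compute $d\tilde p_\theta$ in closed form. (The paper simply exhibits the map $\mathrm{diag}(2r,2s)$ sending $\frac12\,{}^t(1,1)$ to ${}^t(r,s)$ rather than classifying all intertwiners via Schur, but that is a cosmetic difference; your cyclicity check and your remark that a different choice of relatively invariant measure is absorbed into $\chi$ are both correct and harmless additions.)

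One concrete slip to fix: your identification $b:=\xi_1 r$, $a:=\xi_2 s$ attaches $b$ to the coefficient of $x$ and $a$ to the coefficient of $x^{-1}$, and drops the factor $\tfrac12$, whereas in (\ref{def:GIG}) the density is $x^{\lambda-1}e^{-(ax+b/x)/2}$, so $a$ must be (twice) the coefficient of $x$ and $b$ (twice) the coefficient of $1/x$. This matters because the admissibility region (\ref{eq:param_GIG}) is \emph{not} symmetric under $a\leftrightarrow b$: conditions (ii) and (iii) couple $a$ and $b$ to the sign of $\lambda$. With your labels the convergence analysis at $x\to\infty$ and $x\to 0^+$ would produce $a>0,\,b=0,\,\lambda<0$ and $a=0,\,b>0,\,\lambda>0$, contradicting (\ref{eq:param_GIG}). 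Setting $a:=2\xi_1 r$ and $b:=2\xi_2 s$ (as the paper arranges by choosing $v_0=\frac12\,{}^t(1,1)$) repairs this, and then the convergence bookkeeping you describe does reproduce $\Theta$ exactly.
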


\begin{definition}[Generalized inverse Gaussian distribution. See \cite{j82} for the details]
The following distribution on $\R_{>0}$ is called generalized inverse Gaussian distribution. 
\begin{align}
c_{a,b,\lambda}x^{\lambda-1}e^{-(ax+b/x)/2}dx\quad (x\in \R_{>0}),\label{def:GIG}
\end{align}
where $dx$ denotes Lebesgue measure on $\R_{>0}$,  and $(a,b,\lambda)$ satisfies one of the following three conditions:
\begin{align}
(i)\ a>0, b>0,\  (ii)\ a>0, b=0, \lambda>0,\ (iii)\ a=0, b>0, \lambda<0. \label{eq:param_GIG}
\end{align}
Here $c_{a,b,\lambda}$ is the normalizing constant given as follows, respectively. 
\begin{align}
(i)\ \frac{(a/b)^\frac{\lambda}{2}}{2K_\lambda(\sqrt{ab})},\ (ii)\ \frac{1}{\Gamma(\lambda)}\left(\frac{a}{2}\right)^\lambda,\ (iii)\ \frac{1}{\Gamma(-\lambda)}\left(\frac{b}{2}\right)^{-\lambda}, 
\end{align}
where $K_\lambda$ is the modified Bessel function of the second kind with index $\lambda$. 
\end{definition}
\begin{proof}[Proposition~\ref{prop:GIG}]
Put $v_0:=\frac{1}{2}\begin{pmatrix}1\\1\end{pmatrix}$. 
For $r,s\neq 0$, a $G$-linear isomorphism $\begin{pmatrix} 2r & 0\\0 & 2s\end{pmatrix}\in GL(V)$ gives $(V,v_0)\sim (V,\begin{pmatrix}r\\s\end{pmatrix})$, which implies the former part. 

For the latter part, it is enough to show the case $(V,v_0)$ by Theorem~\ref{theorem:equiv}.  
It is easily checked that $\Omega_0(G,H)=\{ x\mapsto x^\lambda\ |\ \lambda\in \R\}$. 
Take a relatively invariant measure $\frac{dx}{x}$ on $\R_{>0}$. 
We identify $(\R^2)^\vee$ with $\R^2$ by taking the standard inner product. 
Then we have 
\begin{align*} d\tilde{p}_{a,b,\lambda}(x)&:= \exp(-\pairing{\begin{pmatrix}a\\b\end{pmatrix}}{\begin{pmatrix}x&\\& x^{-1}\end{pmatrix}v_0})x^\lambda\frac{dx}{x} \quad (\begin{pmatrix}a\\ b\end{pmatrix}\in \R^2)\\
&=\exp(-(ax+bx^{-1})/2)x^{\lambda-1}dx. 
\end{align*}
We get $\Theta=\{\theta=(a,b,\lambda)\in \R^3\ |\ (a,b,\lambda)\text{ satisfies }(\ref{eq:param_GIG})\}$. 
By normalizing these distributions, 
we obtain the desired family of GIG (\ref{def:GIG}). 
\end{proof}

Finally, let us check the injectivity of the correspondence (\ref{eq:correspondence}). 
For $(a,b,c,\lambda)\in~\R^4$, 
\begin{align*}
axg+byg^{-1}=\lambda \log g+c \quad \text{for any }g\in G
\end{align*}
holds only if $(a,b,c,\lambda)=0$. Thus, the condition (iii) of Theorem~\ref{theorem:main} is satisfied. 

\section{Proof of main theorems}
In this section, we give proofs to Theorems~\ref{theorem:main} and \ref{theorem:equiv} and Proposition~\ref{prop:proper_affine_subspace}. 
\subsection{Preliminary}
In this subsection, we prepare some notations for proofs in the following sections. 
Let $G$ be a Lie group, H a closed subgroup of $G$ and V a finite dimensional real vector space. 
\begin{notation}
We denote by $C(G)$ the vector space consisting of all $\R$-valued continuous functions on $G$. 
The constant function $1$ is an element of $C(G)$. 
The space $C(G)$ admits 
the left and right regular representations $L$, $R: G\to GL(C(G))$, respectively. 
We put $C(G)^H:=\{f\in C(G)\ |\ R_hf=f \text{ for any }h\in H\}$. 
\end{notation}

\begin{remark}\label{rem:in_log_Omega}
The set $\log\Omega_0(G,H)$ is a subspace of $C(G)$ (see (\ref{eq:log_omega})). 
For $f\in C(G)$, the condition $f\in \log\Omega_0(G,H)$ is equivalent to the pair of the following conditions:
\begin{enumerate}
\item[$(a)$] \ $f(h)=0$ for any $h\in H$, 
\item[$(b)$] \ $f(gg')=f(g)+f(g')$ for any $g,g'\in G$. 
\end{enumerate}
\end{remark}

\begin{notation}
We denote by $\ev$ the evaluation map. 
We identify $V$ with $(V^\vee)^\vee$ canonically as follows:
\begin{align}
V\to (V^\vee)^\vee,\ x\mapsto \ev_x. 
\end{align}
Let $W$ be a subspace of $V$. Then we put
\begin{align}
W^\perp:=\{f\in V^\vee\ |\ \pairing{f}{w}=0 \text{ for any }w\in W \}. 
\end{align}
\end{notation}

\begin{notation}
For a representation $\rho:G\to GL(V)$, we denote the contragredient representation by $\rho^\vee:G\to GL(V^\vee)$. 
We often use simpler notation $g^\vee \xi:=\rho^\vee(g)\xi$ for $g\in G$ and $\xi\in V^\vee$. 
Then, the following equality holds:
\begin{align}
\pairing{g^\vee \xi}{v}=\pairing{\xi}{g^{-1}v} \quad (g\in G,\ v\in V,\ \xi\in V^\vee). \label{eq:contragredient}
\end{align}
\end{notation}

\subsection{Proof of Theorem~\ref{theorem:main}}\label{sec:theorem_main}
\begin{proof}[Theorem~\ref{theorem:main}]
We are enough to show $\lnot$(ii)$\Rightarrow \lnot$(iii)$\Rightarrow \lnot$(i)$\Rightarrow\lnot$(ii). 

First, we see $\lnot$(ii)$\Rightarrow \lnot$(iii). 
Take $\xi\in V^\vee\setminus\{0\}$ such that $f_\xi\in \log\Omega_0(G,H)$. 
Then there exists $\chi\in \Omega_0(G,H)$ such that 
$\pairing{\xi}{gv_0-v_0}=\pairing{\xi}{gv_0}-\pairing{\xi}{v_0}=\log \chi(g)$ for any $g\in G$, 
so $\lnot$(iii) is proved. 

Next, we see $\lnot$(iii)$\Rightarrow \lnot$(i). 
Assume there exist $\xi\in V^\vee\setminus\{0\}$, $c\in \R$ and $\chi\in \Omega_0(G,H)$ satisfying $\pairing{\xi}{gv_0}=\log \chi(g)+c$ for any $g\in G$. 
Take any $\theta_1=(\xi_1, \chi_1)\in \Theta$ and put $\theta_2:=(\xi_1+\xi,\chi_1\chi)\in V^\vee\times \Omega_0(G,H)$. 
It is enough to show that $\theta_2\in \Theta$ and $p_{\theta_1}=p_{\theta_2}$. 
This comes from 
$d\tilde{p}_{\theta_2}(x)=e^{-\pairing{\xi_1+\xi}{xv_0}}\chi_1(x)\chi(x)d\mu(x)=e^{-\pairing{\xi}{xv_0}+\log \chi(x)}e^{-\pairing{\xi_1}{xv_0}}\chi_1(x)d\mu(x)=e^{-c}d\tilde{p}_{\theta_1}(x)$. 

Finally, we see $\lnot$(i)$\Rightarrow\lnot$(ii). 
Assume two distinct elements $\theta_1=(\xi_1,\chi_1)$ and $\theta_2=(\xi_2,\chi_2)\in \Theta$ satisfy $p_{\theta_1}=p_{\theta_2}$. 
Put $\xi:=\xi_2-\xi_1$. 
It is enough to show the following:
\begin{claim}
$\xi\neq 0$ and $f_\xi\in \log\Omega_0(G,H)$. 
\end{claim}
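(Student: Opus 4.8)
The plan is to turn the measure equality $p_{\theta_1}=p_{\theta_2}$ into a pointwise functional identity on $G$ and then simply read off the two assertions. First I would write the normalized densities explicitly: by (\ref{eq:normalized_measure}) and (\ref{eq:tilde_p}),
\[
dp_{\theta_i}(x)=e^{-\varphi(\theta_i)}e^{-\pairing{\xi_i}{xv_0}}\chi_i(x)\,d\mu(x)\qquad(i=1,2),
\]
and both $\varphi(\theta_i)$ are finite since $\theta_i\in\Theta$. The hypothesis $p_{\theta_1}=p_{\theta_2}$ says these two Radon--Nikodym derivatives with respect to $\mu$ agree $\mu$-almost everywhere. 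Since both densities are everywhere strictly positive and continuous in $x$ (the map $x\mapsto xv_0$ is continuous, as is $\chi_i$), and since the relatively $G$-invariant measure $\mu$ on $X=G/H$ has full support, the $\mu$-a.e.\ equality upgrades to equality at every point of $X$. Lifting along $G\to G/H$, which is legitimate because $v_0$ is $H$-fixed and $\chi_i|_H=1$ (cf.\ Remark~\ref{remark:well_def_notion}), I obtain the identity at every $g\in G$.

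Next I would take logarithms of the pointwise equality and rearrange. Setting $\chi:=\chi_2\chi_1^{-1}\in\Omega_0(G,H)$ (a group under pointwise multiplication) and $c:=\varphi(\theta_2)-\varphi(\theta_1)\in\R$, the equality of densities becomes
\[
\pairing{\xi}{gv_0}=\log\chi(g)+c\qquad\text{for all }g\in G.
\]
Evaluating at $g=e$ gives $\pairing{\xi}{v_0}=\log\chi(e)+c=c$, so $c=\pairing{\xi}{v_0}$. Subtracting this from the displayed identity yields $f_\xi(g)=\pairing{\xi}{gv_0-v_0}=\log\chi(g)$ for all $g$, that is $f_\xi=\log\chi\in\log\Omega_0(G,H)$, which is the second half of the claim.

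For $\xi\neq 0$ I would argue by contradiction. If $\xi=\xi_2-\xi_1=0$, the displayed identity forces $\log\chi(g)=-c$ to be constant; evaluating at $g=e$ then gives $c=0$ and $\chi\equiv 1$, i.e.\ $\chi_1=\chi_2$. Combined with $\xi_1=\xi_2$ this means $\theta_1=\theta_2$, contradicting the assumption that the two elements of $\Theta$ are distinct. Hence $\xi\neq 0$, completing the claim. The only non-formal step, and the one I expect to be the main obstacle, is the passage from $\mu$-a.e.\ equality to everywhere equality: it rests on the fact that a relatively invariant measure on a homogeneous space is a positive smooth measure and therefore has full support, so that continuity of the densities closes the gap.
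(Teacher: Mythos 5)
Your proposal is correct and follows essentially the same route as the paper: take the a.e.\ equality of the densities, pass to a pointwise identity $\pairing{\xi}{gv_0}=\log(\chi_2\chi_1^{-1})(g)+\mathrm{const}$, evaluate at $g=e$ to pin down the constant and conclude $f_\xi\in\log\Omega_0(G,H)$, and rule out $\xi=0$ by noting it would force $\theta_1=\theta_2$. The only differences are cosmetic (your constant $c$ enters with the opposite sign to what your own rearrangement gives, but this washes out once you evaluate at $e$), and you make explicit the a.e.-to-everywhere upgrade via continuity and full support of $\mu$ -- a detail the paper leaves implicit.
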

From $p_{\theta_1}=p_{\theta_2}$, we have for almost every $x\in X$, 
\begin{align*}
\exp(-\pairing{\xi_1}{x v_0}+\log \chi_1(x)-\varphi(\theta_1)+\pairing{\xi_2}{xv_0}-\log \chi_2(x)+\varphi(\theta_2))=\frac{dp_{\theta_1}}{dp_{\theta_2}}(x)=1. 
\end{align*}
Therefore we have 
\begin{align}
\pairing{\xi}{gv_0}+\varphi(\theta_2)-\varphi(\theta_1)=\log\chi_2(g)-\log\chi_1(g)\in \log \Omega_0(G,H). \label{eq:injective}
\end{align}
From Remark~\ref{rem:in_log_Omega}$(a)$, we have $\varphi(\theta_2)-\varphi(\theta_1)=-\pairing{\xi}{v_0}$, that is, $f_\xi\in \log\Omega_0(G,H)$. 
Moreover, from (\ref{eq:injective}) and $\theta_1\neq \theta_2$, we obtain $\xi\neq 0$. 
\end{proof}

\subsection{Proof of Proposition~\ref{prop:proper_affine_subspace}}\label{subsec:proof_of_prop}
In this subsection, we prove Proposition~\ref{prop:proper_affine_subspace} by using Lemma~\ref{lem:property_V_xi} below. 
\begin{lemma}\label{lem:property_V_xi}
For $\xi\in V^\vee\setminus \{0\}$, 
we consider the following three conditions:
\begin{enumerate}
\item[(i)] $g^\vee\xi = \xi$ for any $g\in G$, 
\item[(ii)] $f_\xi=0$ (see Theorem~\ref{theorem:main} for the definition of $f_\xi$), 
\item[(iii)] there exists $c\in \R$ satisfying $Gv_0\subset \{ v\in V\ |\ \pairing{\xi}{v}=c\}$. 
\end{enumerate}
Then, we have (i)$\Rightarrow$(ii)$\Leftrightarrow$(iii). 
Moreover, under the assumption that $v_0$ is cyclic, the implication (iii)$\Rightarrow $(i) also holds. 
\end{lemma}
\begin{proof}
Since the implications (i)$\Rightarrow$(ii)$\Leftrightarrow$(iii) are easy, 
we prove only the implication (iii)$\Rightarrow$(i) under the assumption that $v_0$ is cyclic. 
Take any $g\in G$. 
It is enough to show that $\pairing{g^\vee \xi}{g'v_0}=\pairing{\xi}{g'v_0}$ for any $g'\in G$. 
From (\ref{eq:contragredient}), we have
\[ \pairing{g^\vee \xi}{g'v_0}=\pairing{\xi}{g^{-1}g'v_0}=c=\pairing{\xi}{g'v_0}. \]
\end{proof}

\begin{proof}[Proposition~\ref{prop:proper_affine_subspace}]
First, note that we have the following three easy implications (a), (b) and (c):
\begin{enumerate}
\item[(a)] \ $\lnot$(A)$\iff$ there exists $\xi\in V^\vee\setminus\{0\}$ satisfying Lemma~\ref{lem:property_V_xi}(iii), 
\item[(b)] \ $\lnot$(B)(2)$\iff$ there exists $\xi\in V^\vee\setminus\{0\}$ satisfying Lemma~\ref{lem:property_V_xi}(i), 
\item[(c)] \ (A)$\implies$ $v_0$ is cyclic. 
\end{enumerate}
Therefore, the equivalence (A)$\Leftrightarrow$(B) comes from Lemma~\ref{lem:property_V_xi}. 

Next, the implication ((\ref{eq:correspondence}) is injective)$\Rightarrow$(A) follows from (a). 
In fact, the condition Theorem~\ref{theorem:main}(ii) fails if 
there exists $\xi\in V^\vee\setminus\{0\}$ satisfying Lemma~\ref{lem:property_V_xi}(ii). 
 
Finally, assume $\Omega_0(G,H)=\{1\}$. 
The converse implication above also holds. 
So, (A) implies the injectivity of (\ref{eq:correspondence}). 
\end{proof}

\subsection{Proof of Theorem~\ref{theorem:equiv}}\label{sec:theorem_equiv}
We show Theorem~\ref{theorem:equiv} by using Lemmas~\ref{lemma:one_to_one_corresp} and \ref{lemma:H-correspondence} below. 
We prove Lemma~\ref{lemma:one_to_one_corresp} in the next subsection. 
\begin{proof}[Theorem~\ref{theorem:equiv}]
It is enough to show that $\{g\mapsto \pairing{\xi}{gv_0}\ |\ \xi \in V^\vee\}=\{g\mapsto \pairing{\xi'}{gv_0'}\ |\ \xi' \in V'^\vee\}$ as a subspace of $C(G)^H$ 
if $(V,v_0), (V',v_0')\in \tilde{\mathcal{V}}(G,H)$ are equivalent.  
This follows from Lemmas~\ref{lemma:one_to_one_corresp} and \ref{lemma:H-correspondence} below. 
\end{proof}

\begin{lemma}\label{lemma:one_to_one_corresp}
Put 
\begin{align*} 
\mathcal{V}(G)&:=\tilde{\mathcal{V}}(G)/\sim, \\
\mathcal{W}(G)&:=\{ W \subset C(G)\ |\ W \text{ is a finite dimensional }L_G\text{-invariant subspace}\}. \end{align*}
The following map gives a one-to-one correspondence. 
\begin{align}
\mathcal{V}(G)\to \mathcal{W}(G),\ (V,v_0)\mapsto \eta(V^\vee),
\end{align}
where
\begin{align}
\eta:=\eta_{V,v_0}:V^\vee \to C(G),\ \xi \mapsto (g\mapsto \pairing{\xi}{gv_0}). \label{eq:eta}
\end{align}
\end{lemma}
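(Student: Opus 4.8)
The plan is to prove that the map $\mathcal{V}(G)\to\mathcal{W}(G)$ is well-defined, injective, and surjective. I would organize the argument around the linear map $\eta=\eta_{V,v_0}$ defined in (\ref{eq:eta}), showing first that its image always lands in $\mathcal{W}(G)$, and then constructing an inverse.

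First I would check \emph{well-definedness}, which has two parts. The image $\eta(V^\vee)$ must be a finite-dimensional $L_G$-invariant subspace of $C(G)$. Finite-dimensionality is immediate since $\eta$ is linear and $V^\vee$ is finite-dimensional. For $L_G$-invariance, I would compute directly: for $\xi\in V^\vee$ and $g_0\in G$, the function $L_{g_0}(\eta(\xi))$ sends $g\mapsto \pairing{\xi}{g_0^{-1}gv_0}$, and using (\ref{eq:contragredient}) this equals $\pairing{(g_0^{-1})^\vee\xi}{gv_0}=\eta((g_0^{-1})^\vee\xi)(g)$, so $L_{g_0}\eta(\xi)=\eta((g_0^{-1})^\vee\xi)\in\eta(V^\vee)$. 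The second part of well-definedness is that the map respects $\sim$: if $\psi\colon V\to V'$ is a $G$-equivariant isomorphism with $\psi(v_0)=v_0'$, then I would verify $\eta_{V,v_0}(V^\vee)=\eta_{V',v_0'}(V'^\vee)$. Indeed the transpose $\psi^\vee\colon V'^\vee\to V^\vee$ is an isomorphism, and for $\xi'\in V'^\vee$ one has $\pairing{\xi'}{gv_0'}=\pairing{\xi'}{g\psi(v_0)}=\pairing{\xi'}{\psi(gv_0)}=\pairing{\psi^\vee\xi'}{gv_0}$, so the two images coincide.

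Next I would prove \emph{injectivity} on $\mathcal{V}(G)$. Suppose $\eta_{V,v_0}(V^\vee)=\eta_{V',v_0'}(V'^\vee)=:W$. The key observation is that since $v_0$ is cyclic, $\eta_{V,v_0}$ is \emph{injective}: if $\pairing{\xi}{gv_0}=0$ for all $g$, then $\xi$ annihilates $\spanvec\{gv_0\}=V$, forcing $\xi=0$. Thus $\eta_{V,v_0}\colon V^\vee\to W$ and $\eta_{V',v_0'}\colon V'^\vee\to W$ are both isomorphisms, and I would take their composite $(\eta_{V',v_0'})^{-1}\circ\eta_{V,v_0}\colon V^\vee\to V'^\vee$. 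Using the $L_G$-equivariance computation above, this map is $G^\vee$-equivariant; dualizing and identifying $V\cong(V^\vee)^\vee$ yields a $G$-equivariant isomorphism $\psi\colon V\to V'$. The remaining point is that $\psi(v_0)=v_0'$, which I would extract by evaluating at the identity $e\in G$: the functional ``evaluation at $e$'' on $W$ corresponds under $\eta_{V,v_0}$ to $\ev_{v_0}$ and under $\eta_{V',v_0'}$ to $\ev_{v_0'}$, pinning down the cyclic vectors to match.

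The \emph{surjectivity} onto $\mathcal{W}(G)$ is where I expect the main obstacle to lie. Given a finite-dimensional $L_G$-invariant $W\subset C(G)$, I must manufacture a pair $(V,v_0)\in\tilde{\mathcal{V}}(G)$ whose image is $W$. The natural candidate is to take $V:=W^\vee$ with the $G$-action induced from the contragredient of $(W,L)$, and let $v_0\in W^\vee$ be the evaluation-at-$e$ functional, $v_0(w):=w(e)$; then $\eta_{V,v_0}(\xi)(g)=\pairing{\xi}{gv_0}$ should reproduce the element of $W$ corresponding to $\xi\in V^\vee=W$ via the double-dual identification. The delicate steps are verifying that $v_0$ is cyclic in $V=W^\vee$ (equivalently that the evaluation functionals $\{L_g v_0\}$ span $W^\vee$, which should follow because $W\subset C(G)$ is a space of genuine functions so a vector of $W$ killed by all point-evaluations $w\mapsto w(g)$ is identically zero), and checking that the reconstructed $\eta$ image is exactly $W$ rather than merely isomorphic to it. I would handle these by carefully tracking the canonical identifications $V\cong(V^\vee)^\vee$ and the compatibility of $L$, $R$, and the contragredient action, since getting the variance of the group action right is the part most prone to sign or inverse errors.
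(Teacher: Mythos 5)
Your proposal is correct and follows essentially the same route as the paper: the paper packages your surjectivity and injectivity steps as the two-sided inverse $\Psi(W)=(W^\vee,\ev_e|_W)$, proving $\Phi\circ\Psi=\id_{\mathcal{W}(G)}$ and $\Psi\circ\Phi(V,v_0)\sim(V,v_0)$, and isolates exactly your key facts as Lemma~\ref{lem:property_of_eta} (equivariance, cyclicity $\Leftrightarrow$ injectivity of $\eta$, invariance under $\sim$) and Lemma~\ref{lem:ev_cyclic} (cyclicity of $\ev_e|_W$). One nit you yourself anticipated: with the convention $(L_{g_0}f)(g)=f(g_0^{-1}g)$ and (\ref{eq:contragredient}) one gets $L_{g_0}\eta(\xi)=\eta(g_0^{\vee}\xi)$ rather than $\eta((g_0^{-1})^{\vee}\xi)$, which changes nothing about the $L_G$-invariance of the image.
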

\begin{lemma}\label{lemma:H-correspondence}
Let $H$ be a closed subgroup of $G$. 
Suppose $(V,v_0)\in \mathcal{V}(G)$ corresponds to $W\in \mathcal{W}(G)$ in Lemma~\ref{lemma:one_to_one_corresp}. 
Then $v_0$ is $H$-fixed if and only if 
any element $w\in W$ is $R_H$-fixed. 
\end{lemma}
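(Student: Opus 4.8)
The plan is to reduce both conditions to a single pointwise identity about the orbit of $v_0$, using the explicit description $W=\eta(V^\vee)$ supplied by Lemma~\ref{lemma:one_to_one_corresp} together with the non-degeneracy of the canonical pairing $V^\vee\times V\to \R$.

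First I would compute the right regular action on a generator of $W$. For $\xi\in V^\vee$, $h\in H$, and $g\in G$, since $(R_hf)(g)=f(gh)$ we have
\[
(R_h\,\eta(\xi))(g)=\eta(\xi)(gh)=\pairing{\xi}{ghv_0}=\pairing{\xi}{g(hv_0)},
\]
whereas $\eta(\xi)(g)=\pairing{\xi}{gv_0}$. Because $W=\eta(V^\vee)$ and $\eta$ is $\R$-linear, the assertion ``every $w\in W$ is $R_H$-fixed'' is equivalent to the statement that $\pairing{\xi}{ghv_0}=\pairing{\xi}{gv_0}$ holds for all $\xi\in V^\vee$, all $g\in G$, and all $h\in H$.

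Next I would strip off $\xi$ using that $V^\vee$ separates the points of the finite-dimensional space $V$: the displayed identity holds for every $\xi$ if and only if $ghv_0=gv_0$ for all $g\in G$ and $h\in H$. Specializing to $g=e$ yields $hv_0=v_0$ for all $h\in H$, that is, $v_0\in V^H$; conversely, if $v_0\in V^H$ then applying $\rho(g)$ to $hv_0=v_0$ recovers $ghv_0=gv_0$ for all $g$, hence every $w\in W$ is $R_H$-fixed. This closes the equivalence in both directions.

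There is essentially no serious obstacle here beyond bookkeeping: the whole content is the interchange between right translation on functions and left translation inside the pairing, after which non-degeneracy of the pairing does the rest (and note that cyclicity of $v_0$, needed to make $\eta$ injective in Lemma~\ref{lemma:one_to_one_corresp}, plays no role in this particular equivalence). The only point requiring care is to quantify correctly: the $R_H$-fixing of all of $W$ must be read as an identity holding for every $\xi$ simultaneously, which is precisely what allows me to pass from functions on $G$ back to the single vector equation $hv_0=v_0$.
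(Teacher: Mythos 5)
Your proof is correct and follows essentially the same route as the paper's: unwind the right regular action on the generators $\eta(\xi)$ of $W$, use non-degeneracy of the pairing $V^\vee\times V\to\R$ to pass from $\pairing{\xi}{ghv_0}=\pairing{\xi}{gv_0}$ to $ghv_0=gv_0$, and then reduce to $hv_0=v_0$. The only difference is that you spell out the last equivalence (specializing to $g=e$ and conversely applying $\rho(g)$) and remark that cyclicity is not needed, which the paper leaves implicit.
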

\begin{proof}
We have
\begin{align*}
&\text{the function } \eta(\xi):G\to \R \text{ is }R_H\text{-fixed for any }\xi \in V^\vee, \\
\iff & \pairing{\xi}{ghv_0}=\pairing{\xi}{gv_0}\ \text{ for any }g\in G, h\in H\text{ and } \xi\in V^\vee, \\
\iff & ghv_0=gv_0 \text{ for any } g\in G\text{ and } h\in H, \\
\iff & v_0\text{ is }H\text{-fixed}. 
\end{align*}
\end{proof}

\subsection{Proof of Lemma~\ref{lemma:one_to_one_corresp}}
In this subsection, we prove Lemma~\ref{lemma:one_to_one_corresp}. 
To show this lemma, we use Lemmas~\ref{lem:property_of_eta} and \ref{lem:ev_cyclic} below. 

\begin{lemma}[property of $\eta$]\label{lem:property_of_eta}
The map $\eta:V^\vee\to C(G)$ defined in $(\ref{eq:eta})$ satisfies the following:
\begin{enumerate}
\item[$(1)$] $\eta$ is a $G$-equivariant linear map, 
\item[$(2)$] $v_0$ is cyclic if and only if $\eta$ is injective, 
\item[$(3)$] $(V,v_0)\sim (V',v_0')\Rightarrow \eta(V^\vee)=\eta'(V'^\vee)$, where $\eta=\eta_{V,v_0}$ and $\eta'=\eta_{V',v_0'}$. 
\end{enumerate}
\end{lemma}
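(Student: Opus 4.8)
The plan is to verify the three assertions directly from the definition of $\eta$ together with the defining relation (\ref{eq:contragredient}) of the contragredient representation; none of them needs more than unwinding definitions, so the real work is fixing the right conventions and invoking finite-dimensional duality at the single point where it is needed. Throughout I write $L_g$ for the left regular action, so that $(L_g w)(x)=w(g^{-1}x)$ for $w\in C(G)$. For $(1)$, linearity of $\eta$ is immediate from bilinearity of the pairing, and for $G$-equivariance I would compute, for $g,x\in G$ and $\xi\in V^\vee$,
\[
\eta(g^\vee\xi)(x)=\pairing{g^\vee\xi}{xv_0}=\pairing{\xi}{g^{-1}xv_0}=\eta(\xi)(g^{-1}x)=(L_g\,\eta(\xi))(x),
\]
where the middle equality is (\ref{eq:contragredient}). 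Hence $\eta(g^\vee\xi)=L_g\,\eta(\xi)$, so $\eta$ intertwines $\rho^\vee$ with $L$.

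For $(2)$, I would observe that $\xi\in\Ker\eta$ means $\pairing{\xi}{gv_0}=0$ for all $g\in G$, i.e.\ $\xi$ annihilates every vector of the orbit $Gv_0$ and hence annihilates $\spanvec\{gv_0\mid g\in G\}$. Thus $\Ker\eta=(\spanvec\{gv_0\mid g\in G\})^\perp$. Since $V$ is finite dimensional, a subspace $W\subset V$ satisfies $W^\perp=\{0\}$ if and only if $W=V$; applying this with $W=\spanvec\{gv_0\mid g\in G\}$ shows that $\eta$ is injective exactly when $v_0$ is cyclic.

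For $(3)$, given a $G$-equivariant linear isomorphism $\psi\colon V\to V'$ with $\psi(v_0)=v_0'$, I would pass to its transpose $\psi^\vee\colon V'^\vee\to V^\vee$, defined by $\pairing{\psi^\vee\xi'}{v}=\pairing{\xi'}{\psi v}$, which is again a linear isomorphism. Using $G$-equivariance of $\psi$ and $\psi(v_0)=v_0'$, for $\xi'\in V'^\vee$ and $g\in G$ one gets
\[
\eta'(\xi')(g)=\pairing{\xi'}{gv_0'}=\pairing{\xi'}{\psi(gv_0)}=\pairing{\psi^\vee\xi'}{gv_0}=\eta(\psi^\vee\xi')(g),
\]
so $\eta'=\eta\circ\psi^\vee$. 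As $\psi^\vee$ is surjective, $\eta'(V'^\vee)=\eta(\psi^\vee(V'^\vee))=\eta(V^\vee)$, which is $(3)$. Since every step reduces to a definitional computation, I do not expect a genuine obstacle; the only points requiring care are fixing the left-regular convention so that the equivariance in $(1)$ emerges as $L$ rather than $R$, and recalling that the equivalence $W^\perp=\{0\}\iff W=V$ used in $(2)$ relies on $V$ being finite dimensional.
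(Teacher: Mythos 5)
Your proof is correct and follows essentially the same route as the paper's: linearity plus the contragredient relation (\ref{eq:contragredient}) for part (1), the identification of $\Ker\eta$ with the annihilator of $\spanvec\{gv_0\mid g\in G\}$ for part (2), and the identity $\eta'=\eta\circ\psi^\vee$ for part (3). You merely spell out the equivariance computation and the finite-dimensional duality step that the paper leaves implicit.
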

We give a proof of this lemma at the end of this subsection. 
\begin{lemma}\label{lem:ev_cyclic}
Let $W\subset C(G)$ be a finite dimensional $L_G$-invariant subspace. 
Then $v_0:=\ev_e|_W\in W^\vee$ is $L_G^\vee$-cyclic in $W^\vee$. 
\end{lemma}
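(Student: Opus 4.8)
The plan is to compute the $L_G^\vee$-orbit of $v_0$ explicitly and then show it spans $W^\vee$ by a duality argument. Recall that cyclicity of $v_0$ means $\spanvec\{g^\vee v_0 \mid g\in G\}=W^\vee$, where I write $g^\vee$ for the action of $L_g^\vee$ on $W^\vee$.

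First I would identify each orbit element. Using the defining relation \eqref{eq:contragredient} of the contragredient representation together with the convention $(L_a w)(x)=w(a^{-1}x)$ for the left regular representation, I compute for $g\in G$ and $w\in W$:
\begin{align*}
\pairing{g^\vee v_0}{w}=\pairing{v_0}{L_{g^{-1}}w}=(L_{g^{-1}}w)(e)=w(g).
\end{align*}
Hence $g^\vee v_0=\ev_g|_W$ for every $g\in G$; that is, the orbit of $v_0$ consists precisely of the evaluation functionals $\{\ev_g|_W \mid g\in G\}$.

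It then remains to show that $U:=\spanvec\{\ev_g|_W \mid g\in G\}$ equals $W^\vee$. I would argue by duality, using the canonical identification $(W^\vee)^\vee\cong W$ via $w\mapsto \ev_w$ (valid since $\dim W<\infty$). If $U\subsetneq W^\vee$, then the annihilator $U^\perp\subset (W^\vee)^\vee=W$ is nonzero, so there is a nonzero $w\in W$ with $\pairing{\ev_g|_W}{w}=0$ for all $g\in G$. But $\pairing{\ev_g|_W}{w}=w(g)$, so $w$ vanishes identically on $G$, i.e. $w=0$ as an element of $C(G)$, a contradiction. Therefore $U=W^\vee$, which is exactly the cyclicity of $v_0$.

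The only delicate point is keeping the representation conventions straight in the first step: one must confirm that the contragredient of the left regular action sends $v_0=\ev_e|_W$ to $\ev_g|_W$ (and not to $\ev_{g^{-1}}|_W$), which is precisely what makes the orbit exhaust all evaluation functionals. Once the orbit is identified, the second step is the standard fact that the evaluation functionals span the full dual of any finite-dimensional space of functions separated by its points of definition, and no ingredient beyond $\dim W<\infty$ is needed.
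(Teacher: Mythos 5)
Your proof is correct and is essentially the paper's own argument: the paper likewise reduces cyclicity to showing the annihilator of the orbit span in $W\cong(W^\vee)^\vee$ is zero, via the same chain $f(g)=(L_{g^{-1}}f)(e)=\pairing{v_0}{L_{g^{-1}}f}=\pairing{L_g^\vee v_0}{f}$. You merely split that computation into two steps (first identifying $L_g^\vee v_0=\ev_g|_W$, then invoking that evaluation functionals separate functions), which is a presentational difference only.
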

\begin{proof}
Put $E:=\spanvec\{ L_g^\vee v_0\ |\ g\in G\}\subset W^\vee$. 
It is enough to show $E^\perp=\{0\}$. 
Take any function $f\in E^\perp$, 
then we have 
$f(g)=(L_{g^{-1}}f)(e)=\pairing{v_0}{L_{g^{-1}}f}=\pairing{L_g^\vee v_0}{f}=0$. 
Therefore, we obtain $f=0$. 
\end{proof}


\begin{proof}[Lemma~\ref{lemma:one_to_one_corresp}]
From Lemmas~\ref{lem:property_of_eta}(1) and \ref{lem:ev_cyclic}, 
the following maps are well-defined:
\begin{align}
\Phi: \tilde{\mathcal{V}}(G)&\to \mathcal{W}(G), &(V,v_0)&\mapsto \eta(V^\vee),\\
\Psi: \mathcal{W}(G)&\to \tilde{\mathcal{V}}(G), &W&\mapsto (W^\vee, \ev_e|_W). 
\end{align}
Then it is enough to show the following:
\begin{enumerate}
\item[(a)] $(V,v_0)\sim (V',v_0')$ in $\tilde{\mathcal{V}}(G)\Rightarrow \Phi(V,v_0)=\Phi(V',v_0')$, 
\item[(b)] $\Phi\circ \Psi=\id_{\mathcal{W}(G)}$, 
\item[(c)] $\Psi\circ \Phi (V,v_0)\sim (V,v_0)$ in $\tilde{\mathcal{V}}(G)$ for $(V,v_0)\in \tilde{\mathcal{V}}(G)$. 
\end{enumerate}

First, the condition (a) follows from Lemma~\ref{lem:property_of_eta}(3). 

Next, we show the condition (b). 
Let $W$ be an element of $\mathcal{W}(G)$. 
Since we have $\Psi(W)=(W^\vee, \ev_e|_W)$, 
we get $\Phi\circ \Psi(W)=\{ g\mapsto \pairing{\xi}{L_g^\vee (\ev_e|_W)}\ |\ \xi \in (W^\vee)^\vee\}$. 
Then, we have
\begin{align*}
\pairing{\xi}{L_g^\vee (\ev_e|_W)}
=(L_g^\vee (\ev_e|_W))(\xi)
=(\ev_e|_W)(L_{g^{-1}}\xi)
=(L_{g^{-1}}\xi)(e)
= \xi(g). 
\end{align*}
Therefore, we obtain $\Phi\circ \Psi(W)=W$. 

Finally, we show the condition (c). 
Let $(V,v_0)$ be an element of $\tilde{\mathcal{V}}(G)$. 
Put $W:=\eta(V^\vee)$ and $(V',v_0'):=\Psi \circ \Phi (V,v_0)=\Psi(W)=(W^\vee,\ev_e|_{W})$. 
Since $\eta^\vee:W^\vee\to (V^\vee)^\vee$ is a $G$-linear isomorphism by Lemma~\ref{lem:property_of_eta}(1) and (2), 
it is enough to show that 
$\eta^\vee(\ev_e|_{W})=v_0$. 
For any $\xi\in V^\vee$, we have 
\begin{align}
\pairing{\xi}{\eta^\vee(\ev_e|_{W})}
=\pairing{\eta(\xi)}{\ev_e|_{W}}
=\eta(\xi)(e)=\pairing{\xi}{v_0}. 
\end{align}
Therefore, we obtain $\eta^\vee(\ev_e|_{W})=v_0$. 
\end{proof}

\begin{proof}[Lemma~\ref{lem:property_of_eta}]
\begin{enumerate}
\item[(1)] Clearly, $\eta$ is a linear map. 
The $G$-equivariance of $\eta$ follows from the definition of the contragredient representation. 
\item[(2)] Since $\eta$ is linear, it is enough to show that $v_0$ is cyclic if and only if $\ker \eta=\{0\}$. 
The condition $\ker \eta=\{0\}$ means that for $\xi\in V^\vee$, $\pairing{\xi}{gv_0}=0$ for any $g\in G$ implies $\xi=0$. 
Therefore this is equivalent to the condition $v_0$ is cyclic. 
\item[(3)] Take a $G$-equivariant linear isomorphism $\psi:V\to V'$ with $\psi(v_0)=v_0'$. 
Then it is enough to show $\eta'=\eta \circ \psi^\vee:V'^\vee\to C(G)$. 
For any $\xi'\in V'^\vee$ and $g\in G$,  
\begin{align*}
\eta\circ \psi^\vee(\xi')(g)
=\pairing{\psi^\vee \xi'}{gv_0}
=\pairing{\xi'}{\psi(gv_0)}
=\pairing{\xi'}{g\psi(v_0)}
=\pairing{\xi'}{gv_0'}=\eta'(\xi')(g). 
\end{align*}
\end{enumerate}
\end{proof}
\section*{Acknowledgements}
The authors would like to thank Dr. Fr\'ed\'eric Barbaresco for recommending us to submit a paper to the conference Geometric Science of Information 2019. 
The authors wish to thank referees for several helpful comments, 
particularly the comment concerning the condition (A) in Proposition~\ref{prop:proper_affine_subspace}.

\end{document}